\newtheorem{thm}{Theorem}
\newtheorem{cor}{Corollary}
\newtheorem{lemma}[thm]{Lemma}
\newtheorem{defn}[thm]{Definition}
\newtheorem{rem}{Remark}
\def\C{{\mathbb C}}
\def\D{{\mathbb D}}
\begin{document}

\title[univalence of certain integral of harmonic mappings]{A new approach for the univalence of certain integral of harmonic mappings}

\author[H. Arbeláez, V. Bravo, R. Hernández, W. Sierra, and O. Venegas ]{Hugo Arbel\'{a}ez \and Victor Bravo \and Rodrigo Hern\'andez \and  Willy Sierra\and Osvaldo Venegas}

\thanks{The second, and third, and fivth authors were partially supported by Fondecyt Grants \#1190756 .
\endgraf  {\sl Key words:} Univalent Mappings, Integral Transformation, Geometric Function Theory.
\endgraf {\sl 2010 AMS Subject Classification}. Primary: 30C45, 44A20;\,
Secondary: 30C55.}
\maketitle

\begin{abstract}
The principal goal of this paper is to extend the classical problem of find the values of $\alpha\in \C$ for which the mappings, either $F_\alpha(z)=\int_0^z(f(\zeta)/\zeta)^\alpha d\zeta$ or $f_\alpha(z)=\int_0^z(f'(\zeta))^\alpha d\zeta$  are univalent, whenever $f$ belongs to some subclasses of univalent mappings in $\D$, but in the case of harmonic mappings, considering the \textit{shear construction} introduced by Clunie and Sheil-Small in \cite{CSS}.
\end{abstract}

\section{Introduction}
Let $f$ be an univalent analytic function defined on the unit disc $\D$. One of the most interesting problem in Geometric Function Theory was the celebrate Bieberbach conjecture (see page 13 in \cite{gk}), which was finally proved by de Brange in \cite{deBranges}. The various attempts to prove this conjecture led to some other interesting problems, one of which was introduced by Royster in \cite{Ro65}, who asked for which values of $\alpha,$ the integral transformations 
\begin{equation}\label{eq def of F_alpha}
F_\alpha(z)=\int_0^z\left(\frac{f(\zeta)}{\zeta}\right)^\alpha d\zeta ,
\end{equation}
or
\begin{equation}\label{eq def of f_alpha}
f_\alpha(z)=\int_0^z(f'(\zeta))^\alpha d\zeta\,,
\end{equation}
are univalent mappings on the unit disc. Some partial results in this direction have been obtained by several authors, in particular Kim and Merkes in \cite{km} proved that $F_\alpha$ is univalent if $|\alpha|<1/4$ and subsequently, J.A. Pfaltzgraff in \cite{Pf75} proved that $f_\alpha$ is univalent if $|\alpha|<1/4.$ However, it is well known that the mapping
\[f(z)=\exp(\mu \log(1-z)),\quad |\mu+1|\leq 1,\quad |\mu-1|\leq 1,\]
is univalent in $\D$ but, for any $\alpha$ such that $|\alpha|>1/3$ and $\alpha\neq 1$, the corresponding $f_\alpha$ is not univalent. With respect to the transformation $F_{\alpha},$ an analogous result was obtained in \cite{km}. In theses paper, the authors proved that for each $\alpha$ with $|\alpha|>1/2$ there is a $\gamma$ such that the function \[f(z)=ze^{\gamma\log(1-z)}=z(1-z)^\gamma\]
is univalent in $\D$, but the associated function $F_\alpha$ is not. Related to this gap, there is a long standing open problem, which is to find the sharp bounds for the values of $\alpha$ such that the corresponding functions are univalent in the unit disc. The problem of this integral operator and its application to different subclasses of univalent mappings, such as convex and starlike functions, among others, has been studied for many authors because of its intrinsic beauty and simplicity. The reader can find an interesting resume of this results in \cite{G}.

The concept of \textit{linear invariant family} (LIF) of holomorphic mappings was introduced by Ch. Pommerenke in \cite{POM1} and has been widely studied in different scenarios, including harmonic mappings in the plane and several complex variables, see for example \cite{Dur,gk}. A family $\mathcal F$ of normalized locally univalent functions of the form
\[f(z)=z+a_2z^2+a_3z^3+\ldots, \qquad z\in\mathbb{D},\]
 is said to be LIF, if for any function $f\in\mathcal F,$ we have $(f\circ \varphi_a-f(a))/((1-|a|^2)f'(a))\in\mathcal F,$ for all automorphism $\varphi_a(z)=(z+a)/(1+\overline a z)$ of $\D.$ One of the most studied families is the class of normalized univalent mappings, named the class $S$. The reader can find some important properties of this class in \cite{Dur83}. At this point, the \textit{order} of a family $\mathcal F$ is the supremum of the modulus of the second Taylor coefficient $|a_2(f)|$ of the mappings $f\in \mathcal F$, for instance the order of $S$ is 2. In connection with the transformation defined in \eqref{eq def of f_alpha}, J.A. Pfaltzgraff showed in \cite{Pf75} the following theorem:
 
\noindent \textbf{Theorem A.} \textit{Let $\mathcal{F}$ be a linear invariant family of order $\beta.$ If $|\alpha|<\frac{1}{2\beta},$ then $f_{\alpha}$ is univalent for all $f\in\mathcal{F}$.}\\

The principal goal of this note, which continues the work in \cite{VBRHOV1}, is to extend the integral transformations defined in equations \eqref{eq def of F_alpha} and \eqref{eq def of f_alpha} to complex harmonic mappings. In the harmonic case, there is more than one way to do this, in the present paper we study some of those options. In fact, it is well known that each planar harmonic mapping $f,$ defined on a simply connected domain $\Omega\subseteq\mathbb{C},$ can be represented as the sum of two functions; one analytic and the other anti-analytic, this is $f=h+\overline g,$ where $h$ and $g$ are analytic functions on $\Omega$. Thus, $f$ is completely determined, except for an additive constant, by $h$ and $g$. However, under certain hypothesis on the range of $f,$ one can find $f$ as the solution of the system given by the equations $h-g=\varphi$ and $g'/h'=\omega,$ in the case when $h$ is a locally univalent mapping. The function $\omega$ is named the second complex dilatation. In this context, $f$ is called the \textit{horizontal shear} of $\varphi,$ see \cite{CSS}. This will be our point of view to extend the transformations $F_\alpha$ and $f_\alpha$ given by the equations \eqref{eq def of F_alpha} and \eqref{eq def of f_alpha} respectively, to complex harmonic mappings. In the last years we have seen several papers about geometric function theory but in the context of harmonic mappings, since the seminal work of Clunie and Sheil-Small \cite{CSS} and the excellent book of P. Duren \cite{Dur}. In this sense, the present paper is part of the same effort and continues the work presented in \cite{VBRHOV1}.

The manuscript is organized as follows. In Section\,\ref{section preliminaries}, we define the principal elements of planar harmonic mappings to be used in our study. Section\,\ref{section F_alpha} is devoted to extend the integral transformation of the first type given by equation \ref{eq def of F_alpha} to harmonic mappings and to find the real values of $\alpha$ for which the corresponding function $F_\alpha$ is,either univalent function or close-to-convex function, considering that $\varphi$ belongs to different classes of univalent functions. Finally, in Section\,\eqref{section f_alpha} we give an alternative extension of the integral transformation of the second type defined in \eqref{eq def of f_alpha}. This extension is different from the one given in \cite{VBRHOV1}.

\section{Preliminaries}\label{section preliminaries}

A complex-valued harmonic function in a simply connected domain $\Omega$ has
a canonical representation $f=h+\overline g$, where $h$ and $g$ are analytic functions in $\Omega$, that is unique up to an additive constant. When $\Omega=\D$ is convenient to choose the additive constant so that $g(0)=0$. The representation $f=h+\overline g$ is therefore unique and is called the canonical representation of $f$. A result of Lewy \cite{Lewy} states that $f$ is locally univalent if and only if its Jacobian $J_f=|h'|^2-|g'|^2$ does not vanish in $\Omega$. Thus, harmonic mappings are either sense-preserving or sense-reversing depending on the conditions $J_f >0$ or $J_f <0$ throughout the domain $\Omega$ where $f$ is locally univalent, respectively. Since $J_f>0$  if and only if $J_{\overline f}<0,$ throughout this work we will consider sense-preserving mappings in $\D.$ In this case the analytic part $h$ is locally univalent in $\D$ since $h'\neq0$, and the second complex dilatation of $f$, $\omega=g'/h'$, is an analytic function in $\D$ with $|\omega|<1$. The reader can observe that the mapping $f$ is completely determined given the complex dilatation $\omega$ and prescribing $h-g=\varphi$. This construction is called \textit{shear construction} and the mapping $f$ is the horizontal shear of $\varphi$. See \cite{CSS}.

Hern\'andez and Mart\'in \cite{RH-MJ}, defined the harmonic Schwarzian and
pre-Schwarzian derivative for sense-preserving harmonic mappings
$f=h+\overline g$. Using this definition, they generalized different results
regarding analytic functions to the harmonic case
\cite{rhmje,HM-QC,RH-MJ,HM-arch}. The pre-Schwarzian derivative of a
sense-preserving harmonic function $f$ is defined by
\begin{equation}\label{Pf}
P_f=\frac{h''}{h'}-
\frac{\bar{\omega}\omega'}{1-|\omega|^2}=\frac{\partial}{\partial
z}\log(J_f).
\end{equation}
It is easy to see that if $f$ is analytic ($f=h$ and
$\omega=0$) then $P_f=h''/h'$ and recover the classical definition of this
operator. In \cite{RH-MJ} the authors proved that for any $a\in\D$,
$P_{f+\overline {af}}=P_f$. In addition, they showed an extension of Becker's criterion of univalence, which is contained in the following theorem:\\

\noindent {\bf Theorem B.} Let $f=h+\overline{g}$  be a sense-preserving
harmonic function in the unit disc $\D$ with dilatation $\omega$. If for all
$z\in \D$ $$
(1-|z|^2)|zP_f(z)|+\frac{|z\omega'(z)|(1-|z|^2)}{1-|\omega(z)|^2} \leq 1,$$
then $f$ is univalent. The constant 1 is sharp.\\

Observe that using the Schwarz-Pick lemma, $\omega:\D\to\D$ satisfies that \begin{equation}\label{omega*}
\|\omega^*\|=\sup_{z\in\D}\dfrac{|\omega'(z)|(1-|z|^2)}{1-|\omega(z)|^2}\leq1.\end{equation}
In addition we consider $\|\omega\|=\sup\{|\omega(z)|:\,z\in\D\}=\|\omega\|_\infty$.
In \cite{VBRHOV1} the authors proved a generalization of Theorem B, this is:\\

\noindent{\bf Theorem C.} Let $f=h+\overline{g}$ be a sense-preserving harmonic function in the unit disc $\D$ with
dilatation $\omega$, $|\omega|<1$, $c\in \C$ such that $|c|\leq 1$ and $c\neq
-1$. If$$
\left|(1-|z|^2)zP_f(z)+c|z|^2\right|+\frac{|z\omega'(z)|(1-|z|^2)}{1-|\omega(z)|^2}\leq
1,\qquad\qquad z\in \D,$$ then $f$ is univalent. 

\section{On the univalence of $F_\alpha$}\label{section F_alpha}

The main goal of this section is to extend the problem of study the univalence of the integral transformation
\begin{equation}\label{F alpha1} 
	\varphi_\alpha(z)=\int_0^z\left(\dfrac{\varphi(\zeta)}{\zeta}\right)^\alpha d\zeta,
\end{equation}
when $\varphi$ is univalent analytic function, to the setting of harmonic mappings in the plane. To this purpose, we will use the shear construction introduced by Clunie and Sheil-Small in \cite{CSS}.

Let $f=h+\overline g$ be a sense-preserving harmonic mapping in $\mathbb{D}$ with the usual normalization $h(0)=g(0)=0$, $h'(0)=1$, dilatation $\omega,$ and defined as the horizontal shear of a holomorphic function $\varphi$. Since $f$ is sense-preserving mapping, we know that $|\omega|<1$ and $\varphi$ is locally univalent in $\D$. However, to our aim, we need that $\varphi(z)\neq 0$ except possibly at $z=0$. Even though this condition is not guaranteed when $f$ is a sense-preserving harmonic mapping, we will show that in several classical subclass of the univalent harmonic functions, it appears in a natural way.
\begin{defn} Let $f:\D\to \C$ be a sense-preserving harmonic mapping with dilatation $\omega$  and suppose that $f$ is the horizontal shear of a holomorphic function $\varphi.$  We define $F_\alpha$ as the horizontal shear of $\varphi_\alpha$ defined by equation (\ref{F alpha1}), with dilatation $\omega_\alpha=\alpha\omega$.
\end{defn}
\begin{rem}
With the notation of the definition, in virtue of the work of Clunie and Sheil-Small \cite{CSS}, $F_\alpha=H+\overline G,$ where $H,G$ are holomorphic functions satisfying $H-G=\varphi_\alpha$ and $G'/H'=\alpha\omega.$ Therefore $F_\alpha$ is a sense-preserving harmonic mapping when $|\alpha|<1$ and $\varphi_\alpha(z)=0$ if and only if $z=0$. For instance, if $\varphi$ is a starlike mapping with respect to the origin in the unit disc and $\alpha\in[0,1],$ then $\varphi_\alpha$ is a convex mapping (see \cite{MW71}), hence $F_\alpha$ is univalent and convex in the horizontal direction. See \cite{CSS}.  
\end{rem}
\subsection{Results} The rest of the section, $f=h+\overline g:\D\to\C$ will be the horizontal shear of the holomorphic maps $\varphi:\D\to\C$ and we adopt the usual normalization, $h(0)=g(0)=0$, $h'(0)=1$, and $g'(0)=0$. We will use $S^*,$ $SHS,$ and $SHCC$ to denote the class of starlike univalent functions, the class of stable harmonic univalent mapping, and the class of stable harmonic close-to-convex, respectively.  $f=h+\overline g$ belongs to $SHS$ or $SHCC$ if and only if for all $\lambda$ in the unit circle $h+\lambda \overline g$ is univalent or close-to-convex, respectively. See \cite{rhmje}.

\begin{thm}\label{B}
 Let $f=h+\overline{g}$ be a sense-preserving harmonic mapping in $\mathbb{D}$ with dilatation $\omega$. If $\varphi\in S$ then $F_\alpha\in SHS$ for all $\alpha$ such that
\begin{equation}\label{prop 1 F alpha}
|\alpha|\leq \dfrac{1}{2(2+\|\omega^\ast\|(1+\|\omega\|))}.
\end{equation}
\end{thm}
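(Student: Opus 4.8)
The plan is to reduce the problem to the univalence criterion of Theorem B applied to each member of the family $F_\alpha^\lambda = H + \lambda\overline{G}$, $|\lambda| = 1$; showing that all of these are univalent is exactly the assertion $F_\alpha \in SHS$. First I would record that the shear equations $H - G = \varphi_\alpha$ and $G' = \alpha\omega H'$ give $H' = \varphi_\alpha'/(1-\alpha\omega)$ with $\varphi_\alpha'(z) = (\varphi(z)/z)^\alpha$, and hence, by logarithmic differentiation,
\[\frac{H''}{H'} = \alpha\left(\frac{\varphi'}{\varphi} - \frac1z\right) + \frac{\alpha\omega'}{1-\alpha\omega}.\]
The dilatation of $F_\alpha^\lambda$ is $\omega_\lambda = \overline\lambda\,\alpha\omega$, and since $|\lambda| = 1$ a direct computation shows that the term $\overline{\omega_\lambda}\,\omega_\lambda'/(1-|\omega_\lambda|^2)$ is independent of $\lambda$; consequently $P_{F_\alpha^\lambda}$ does not depend on $\lambda$, so every bound we obtain will be uniform over the unit circle.

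The next step is to estimate, for this fixed pre-Schwarzian,
\[zP_{F_\alpha^\lambda} = \alpha\left(\frac{z\varphi'}{\varphi} - 1\right) + \frac{z\alpha\omega'}{1-\alpha\omega} - \frac{z\,\overline{\alpha\omega}\,\alpha\omega'}{1-|\alpha\omega|^2}.\]
For the first summand I would invoke the classical distortion estimate for $\varphi\in S$,
\[\left|\frac{z\varphi'(z)}{\varphi(z)} - \frac{1+|z|^2}{1-|z|^2}\right| \le \frac{2|z|}{1-|z|^2},\]
which, after adding and subtracting $(1+|z|^2)/(1-|z|^2)$ and simplifying, yields $|z\varphi'/\varphi - 1| \le 2|z|/(1-|z|)$ and therefore $(1-|z|^2)\,|\alpha|\,|z\varphi'/\varphi - 1| \le 4|\alpha|$. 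For the second summand I would use $|1-\alpha\omega| \ge 1 - |\omega|$ together with the rewriting $(1-|z|^2)|\omega'|/(1-|\omega|) = (1+|\omega|)\,(1-|z|^2)|\omega'|/(1-|\omega|^2)$ and the Schwarz--Pick bound \eqref{omega*}; this is precisely where the factor $(1+\|\omega\|)$ enters, giving a contribution $\le |\alpha|\,\|\omega^\ast\|(1+\|\omega\|)$. The third summand, bounded via $1-|\alpha\omega|^2 \ge 1-|\omega|^2$ and $|\alpha|^2 \le |\alpha|$, contributes at most $|\alpha|\,\|\omega\|\,\|\omega^\ast\|$.

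It remains to add the dilatation term of the criterion, $|z\omega_\lambda'|(1-|z|^2)/(1-|\omega_\lambda|^2) \le |\alpha|\,\|\omega^\ast\|$, and to collect. Summing the four contributions gives
\[(1-|z|^2)|zP_{F_\alpha^\lambda}(z)| + \frac{|z\omega_\lambda'(z)|(1-|z|^2)}{1-|\omega_\lambda(z)|^2} \le 4|\alpha| + 2|\alpha|\,\|\omega^\ast\|\,(1+\|\omega\|) = 2|\alpha|\bigl(2 + \|\omega^\ast\|(1+\|\omega\|)\bigr),\]
and the hypothesis on $|\alpha|$ makes the right-hand side at most $1$. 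By Theorem B each $F_\alpha^\lambda$ is univalent, so $F_\alpha \in SHS$.

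I expect the main obstacle to be the bookkeeping in the third paragraph: choosing the groupings of the three $\omega$-terms so that the crude bounds collapse to exactly $\|\omega^\ast\|(1+\|\omega\|)$ rather than a larger expression, and in particular the use of $|\alpha| \le 1$ to replace $|\alpha|^2$ by $|\alpha|$. A sharper route combines the last two summands exactly, via $\frac{1}{1-w} - \frac{\overline w}{1-|w|^2} = \frac{1-\overline w}{(1-w)(1-|w|^2)}$ with $w = \alpha\omega$, whose modulus equals $1/(1-|w|^2)$; this would even improve the constant, but the grouping above is what reproduces the stated bound.
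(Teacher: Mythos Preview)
Your argument is correct and reaches exactly the stated bound, but the route differs from the paper's. The paper does not invoke Theorem~B at all; instead it passes to the \emph{analytic} functions $\Phi_\lambda = H + \lambda G$, $\lambda\in\overline{\mathbb D}$, computes
\[
z\frac{\Phi_\lambda''}{\Phi_\lambda'}=\alpha\left(\frac{z\varphi'}{\varphi}-1+\frac{\lambda z\omega'}{1+\lambda\alpha\omega}+\frac{z\omega'}{1-\alpha\omega}\right),
\]
bounds each of the two $\omega$-terms by $\|\omega^*\|(1+\|\omega\|)$ via $|1\pm\lambda\alpha\omega|\ge 1-|\omega|$, and applies the \emph{classical} Becker criterion \cite{B72}; the step from ``every $\Phi_\lambda$ is univalent'' to $F_\alpha\in SHS$ then rests implicitly on the equivalence of \cite{rhmje} between stable harmonic univalence and univalence of all the analytic $H+\lambda G$. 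Your approach stays entirely inside the harmonic framework: you apply Theorem~B directly to each $H+\lambda\overline G$ and exploit the pleasant fact that $P_{F_\alpha^\lambda}$ is $\lambda$-independent on $|\lambda|=1$, so the stable-univalence equivalence is never needed. The bookkeeping is genuinely different---you have three $\omega$-summands plus the dilatation term, where the paper has two $\omega$-summands---but the arithmetic collapses to the same constant $2(2+\|\omega^*\|(1+\|\omega\|))$. Your closing observation is also correct: combining your second and third summands via the identity with $w=\alpha\omega$ yields the sharper condition $|\alpha|\le 1/\bigl(2(2+\|\omega^*\|)\bigr)$, an improvement that the paper's grouping cannot reach.
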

\begin{proof} Since that $\varphi\in  S$ we have that $\varphi(z)=0$ just for $z=0$ and so $F_\alpha=H+\overline{G}$ is well-defined. We observe that for any $\lambda$ in $\overline\D$, the function $\Phi_\lambda=H+\lambda G$ satisfies
\[\Phi'_\lambda=H'(1+\lambda \omega_\alpha)=\varphi'_\alpha(1+\lambda\alpha\omega)/(1-\alpha\omega),\]
whence for all $z\in\D,$ \begin{equation}\label{VARPHI} (1-|z|^2)\left|z\frac{\Phi_\lambda''(z)}{\Phi_\lambda'(z)} \right|= (1-|z|^2)|\alpha|\left|z\frac{\varphi'(z)}{\varphi(z)}-1 +\dfrac{\lambda z\omega'(z)}{1+\lambda\alpha\omega(z)}+\dfrac{z\omega'(z)}{1-\alpha\omega(z)}\right|.\end{equation}
Hence, as $\omega$ is a self-map of the unit disc and $|z\varphi'/\varphi-1|\leq 2/(1-|z|),$  which is a consequence of
the fact that $\varphi$ is univalent mapping and using equation (\ref{omega*}), we obtain that 

$$(1-|z|^2)\left|z\frac{\Phi_\lambda''(z)}{\Phi_\lambda'(z)} \right|\leq 2|\alpha|\left(1+|z|+\|\omega^\ast \|(1+\|\omega\|)|z|\right).$$ Then, $\Phi_\lambda$ satisfies the hypothesis of Becker's criterion of univalence for all $\lambda\in\overline\D$ (see \cite{B72}), whenever $\alpha$ satisfies (\ref{prop 1 F alpha}). Therefore $F_\alpha$ belongs to $SHS$.
\end{proof}

If $\varphi\in S^*$ we can proved that $|z\varphi'/\varphi-1|\leq 2|z|/(1-|z|)<2/(1-|z|)$ as in the class $S$. Therefore the hypothesis of Theorem 2 holds.

\begin{lemma}\label{lemma 1 aux } Let $\mathcal F$ be a linear invariant family of holomorphic mappings in $\D$ of order $\beta<\infty$. For all univalent mapping $\varphi\in\mathcal F$ we have
\begin{equation*}
	(1-|z|^2)\left|\dfrac{z\varphi'(z)}{\varphi(z)}\right|\leq 2\beta,
\end{equation*}
for all $z\in\D.$
\end{lemma}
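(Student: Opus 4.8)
The plan is to reduce the estimate to the classical growth theorem for linear invariant families by exploiting the linear-invariant transform based at the point $z$. Fix $a=z\in\D$ and write $r=|a|$; recall the disc automorphism $\varphi_a(w)=(w+a)/(1+\overline a w)$ sending $0$ to $a$. First I would form the transform
\[
\Phi(w)=\frac{\varphi(\varphi_a(w))-\varphi(a)}{(1-|a|^2)\varphi'(a)},
\]
which lies in $\mathcal F$ by linear invariance and is normalized, $\Phi(0)=0$ and $\Phi'(0)=1$ (the latter from $\varphi_a'(0)=1-|a|^2$). The univalence of $\varphi$ guarantees $\varphi(a)\neq 0$ for $a\neq 0$, so every quotient below is well defined.

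The key computation is to evaluate $\Phi$ at the preimage of the origin. Since $\varphi_a(-a)=0$ and $\varphi(0)=0$, one obtains $\Phi(-a)=-\varphi(a)/((1-|a|^2)\varphi'(a))$, and therefore
\[
(1-|a|^2)\left|\frac{a\varphi'(a)}{\varphi(a)}\right|=\frac{|a|}{|\Phi(-a)|}.
\]
Thus the claim becomes equivalent to the single lower estimate $|\Phi(-a)|\ge |a|/(2\beta)$, i.e.\ to a lower growth bound for the one function $\Phi\in\mathcal F$ evaluated at a point of modulus $r$. At this stage I would invoke Pommerenke's growth theorem for a LIF of order $\beta$ (see \cite{POM1,Dur,gk}): every $\psi\in\mathcal F$ satisfies $|\psi(w)|\ge \tfrac{1}{2\beta}\bigl[1-(\tfrac{1-|w|}{1+|w|})^{\beta}\bigr]$. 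Applying this to $\psi=\Phi$ at $w=-a$ yields
\[
(1-|a|^2)\left|\frac{a\varphi'(a)}{\varphi(a)}\right|\le \frac{2\beta\,r}{1-\left(\dfrac{1-r}{1+r}\right)^{\beta}}.
\]

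It then remains to verify the elementary scalar inequality $\dfrac{r}{1-((1-r)/(1+r))^{\beta}}\le 1$, equivalently $\left(\tfrac{1-r}{1+r}\right)^{\beta}\le 1-r$ for $r\in[0,1)$. This is where the hypothesis on the order enters: every linear invariant family has order $\beta\ge 1$, so with $t:=\tfrac{1-r}{1+r}\in(0,1]$ we get $t^{\beta}\le t\le 1-r$, the last step because $\tfrac{1}{1+r}\le 1$. This closes the estimate and gives $(1-|a|^2)|a\varphi'(a)/\varphi(a)|\le 2\beta$; the case $a=0$ is immediate since the left-hand side tends to $1\le 2\beta$. I expect the only delicate points to be quoting the lower growth bound in the form valid for all members of the family (not merely for normalized univalent maps) and recording that $\beta\ge 1$; both are standard in the LIF literature.
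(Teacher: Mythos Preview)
Your argument is correct and, once unwound, coincides with the paper's proof. The identity $|\Phi(-a)|=\dfrac{|\varphi(a)|}{(1-|a|^2)|\varphi'(a)|}$ that you derive via the Koebe transform is precisely the distortion inequality the paper quotes from \cite[p.~169]{gk}, so your detour through $\Phi$ simply rederives what the authors cite; from that point both proofs reduce to the same scalar inequality $\dfrac{r}{1-\left(\frac{1-r}{1+r}\right)^{\beta}}\le 1$. The only visible difference is in that last step: the paper argues via monotonicity of $\delta(r)=r(1+r)^\beta/[(1+r)^\beta-(1-r)^\beta]$ with $\delta(1)=1$, whereas you use $\beta\ge 1$ together with $\frac{1-r}{1+r}\le 1-r$, which is a slightly cleaner route to the same conclusion. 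One small remark: your worry about whether the lower growth bound holds for ``all members of the family'' is unnecessary here, since $\Phi$ is automatically univalent (it is an affine postcomposition of $\varphi\circ\varphi_a$), so the standard univalent form of the bound already suffices.
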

\begin{proof} It is well known that for all $z\in\D,$ $$\dfrac{|\varphi(z)|}{(1-|z|^2)\varphi'(z)}\geq\dfrac{1}{2\beta}\left[1-\left(\dfrac{1-|z|}{1+|z|}\right)^\beta\right],$$
see for example \cite[page 169]{gk}. Since the function $\delta(x)=x(1+x)^\beta/[(1+x)^\beta-(1-x)^\beta]$ is an increasing function in $[0,1]$, we have that $\delta(x)\leq \delta(1)=1$. Therefore $$(1-|z|^2)\left|\dfrac{z\varphi'(z)}{\varphi(z)}\right|\leq 2\beta\delta(|z|)\leq2\beta,$$
for all $z\in\D.$ 
\end{proof}
\begin{cor} Let $f=h+\overline g$ be a sense-preserving harmonic mapping in $\D$ and $\mathcal F$ a linear invariant family of holomorphic mappings of order $\beta$. If $\varphi$ is an univalent function in $\mathcal F$, we have that:
\begin{enumerate}
\item[(i)] if $\|\omega^*\|(1+\|\omega\|)\leq 1$, then $F_\alpha\in SHS$ for all values of $\alpha$ satisfying $$|\alpha|\leq \dfrac{1}{1+2\beta+\|\omega^*\|^2(1+\|\omega\|)^2}.$$

\item[(ii)] If $\|\omega^*\|(1+\|\omega\|)> 1$, then $F_\alpha\in SHS$ for all values of $\alpha$ satisfying $$|\alpha|\leq \dfrac{1}{2\beta+2\|\omega^*\|(1+\|\omega\|)}.$$
\end{enumerate}
\end{cor}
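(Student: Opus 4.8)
The plan is to follow the proof of Theorem~\ref{B} almost verbatim, replacing only the distortion estimate for $\varphi$: there the bound $|z\varphi'/\varphi-1|\le 2/(1-|z|)$, valid for the whole class $S$, was used, whereas here I would instead invoke Lemma~\ref{lemma 1 aux }, which is sharper for a linear invariant family of finite order $\beta$. As before, since $\varphi$ is univalent with $\varphi(0)=0$ we have $\varphi(z)\neq 0$ for $z\neq 0$, so $F_\alpha=H+\overline G$ is well defined, and it suffices to verify that each $\Phi_\lambda=H+\lambda G$, $\lambda\in\overline\D$, satisfies Becker's criterion \cite{B72}; this yields $\Phi_\lambda$ univalent for every $\lambda$ on the unit circle and hence $F_\alpha\in SHS$.

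Next I would start from the identity \eqref{VARPHI} and apply the triangle inequality to its three summands. For the analytic part, Lemma~\ref{lemma 1 aux } gives $(1-|z|^2)|z\varphi'/\varphi|\le 2\beta$, and bounding the constant term by $(1-|z|^2)\cdot 1$ produces $(1-|z|^2)|z\varphi'/\varphi-1|\le 2\beta+(1-|z|^2)$. For the two dilatation terms the key elementary observation is that, because $|\lambda\alpha|\le 1$ and $|\alpha|\le 1$ (both of which hold a posteriori from the bounds on $|\alpha|$ we are about to impose), one has $|1+\lambda\alpha\omega|\ge 1-|\omega|$ and $|1-\alpha\omega|\ge 1-|\omega|$, whence $\dfrac{1-|\omega|^2}{|1+\lambda\alpha\omega|}\le 1+\|\omega\|$ and likewise for the second denominator. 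Combining this with the Schwarz--Pick estimate \eqref{omega*} shows that each dilatation term is at most $\|\omega^*\|(1+\|\omega\|)|z|$, so the two together contribute at most $2\|\omega^*\|(1+\|\omega\|)|z|$, exactly as in Theorem~\ref{B}.

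Writing $t=|z|\in[0,1]$ and $C=\|\omega^*\|(1+\|\omega\|)$, the three estimates combine to
\begin{equation*}
(1-|z|^2)\left|z\frac{\Phi_\lambda''(z)}{\Phi_\lambda'(z)}\right|\le |\alpha|\bigl(2\beta+1-t^2+2Ct\bigr),
\end{equation*}
and the whole argument reduces to maximizing $g(t)=2\beta+1-t^2+2Ct$ over $[0,1]$. This last step is where the two cases of the statement are born: the critical point is $t=C$, so if $C\le 1$ the maximum is $g(C)=2\beta+1+C^2$, giving case~(i), while if $C>1$ the function $g$ is increasing on $[0,1]$ and the maximum is $g(1)=2\beta+2C$, giving case~(ii). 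Requiring $|\alpha|\,\max_{[0,1]}g\le 1$ then yields precisely the two displayed bounds, and one checks that in either regime the resulting bound is $<1$, so the standing assumptions $|\alpha|\le 1$ and $|\lambda\alpha|\le 1$ used above are indeed met. The computation is entirely routine once the estimates are in place; the only genuine decision, and the main thing to get right, is to retain the factor $1-t^2$ coming from the constant term rather than immediately bounding it by $1$, since discarding the $-t^2$ would collapse the two cases and weaken part~(i).
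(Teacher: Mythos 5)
Your proposal is correct and follows essentially the same route as the paper's own proof: the paper likewise combines the identity \eqref{VARPHI} with Lemma~\ref{lemma 1 aux }, keeps the $1-|z|^2$ term from the constant, bounds the two dilatation terms by $2\|\omega^*\|(1+\|\omega\|)|z|$, and then maximizes the resulting quadratic in $|z|$, with the critical point $|z|=\|\omega^*\|(1+\|\omega\|)$ producing case~(i) and the endpoint $|z|=1$ producing case~(ii). Your added remarks (the lower bounds on the denominators $|1+\lambda\alpha\omega|$, $|1-\alpha\omega|$ and the a posteriori check that $|\alpha|\le 1$) only make explicit details the paper leaves implicit from the proof of Theorem~\ref{B}.
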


\begin{proof} Using Lemma\,\ref{lemma 1 aux } and equation (\ref{VARPHI}) we get 
$$(1-|z|^2)\left|z\frac{\Phi_\lambda''(z)}{\Phi_\lambda'(z)} \right|\leq |\alpha|\left(2\beta+1-|z|^2+2|z|\|\omega^\ast \|(1+\|\omega\|)\right).$$ The maximum value of the expression in the right side is attained in $|z|=\|\omega^\ast \|(1+\|\omega\|)$. Thus, if this quantity is less or equal than 1 hence $\Phi_\lambda$ satisfies the hypothesis of Becker's criterion of univalence for all $\lambda\in\overline\D$ when $\alpha$ holds the inequality in (i). On the other hand, if $\|\omega^\ast \|(1+\|\omega\|)>1$ the maximum value of the right side expression holds for $|z|=1$, then we have that $\Phi_\lambda$ satisfies the hypothesis of Becker's criterion of univalence for all $\lambda\in\overline\D$ whenever $\alpha$ satisfies the inequality in (ii).
\end{proof}

We observed that in any case, when $|\alpha|\leq1/(2+2\beta)$ it follows that $F_\alpha$ is a stable harmonic univalent mapping, this happens when $\varphi$ is a convex mapping, which implies that $\beta=1$, and $|\alpha|\leq1/4$. 

\begin{thm} Let $f=h+\overline g$ be a sense-preserving harmonic mapping in $\D$ with dilatation $\omega$. If $\varphi$ is a convex mapping and $\alpha\in(-0.303, 0.707),$ then $F_\alpha$ is SHCC.
\end{thm}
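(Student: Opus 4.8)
The plan is to reduce the statement to a claim about the analytic functions $\Phi_\lambda=H+\lambda G$. By the definition of $SHCC$ it suffices to prove that $\Phi_\lambda$ is close-to-convex for every $\lambda$ with $|\lambda|=1$. Exactly as in the proof of Theorem \ref{B}, I would start from the factorization
$$\Phi_\lambda'(z)=\left(\frac{\varphi(z)}{z}\right)^{\alpha}\frac{1+\lambda\alpha\omega(z)}{1-\alpha\omega(z)},$$
which follows from $H'=\varphi_\alpha'/(1-\alpha\omega)$ together with $\varphi_\alpha'=(\varphi/z)^{\alpha}$. The strategy is then to exhibit, for each admissible $\lambda$, a convex function $\psi$ with $\mathrm{Re}(\Phi_\lambda'/\psi')>0$, which is the classical comparison criterion for close-to-convexity.

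For $\alpha\in[0,1/\sqrt2)$ the natural choice is $\psi=\varphi_\alpha$. Since $\varphi$ is convex it is in particular starlike, so by the Merkes--Wright result quoted in the Remark, $\varphi_\alpha$ is convex for $\alpha\in[0,1]$; hence $\psi=\varphi_\alpha$ is an admissible comparison function and $\Phi_\lambda'/\psi'=(1+\lambda\alpha\omega)/(1-\alpha\omega)$. It then remains to prove the pointwise inequality $\mathrm{Re}\,\big[(1+\lambda\alpha\omega(z))/(1-\alpha\omega(z))\big]>0$ for all $z\in\D$ and $|\lambda|=1$. Writing $w=\alpha\omega(z)$, so that $|w|<|\alpha|$, and choosing the worst $\lambda$ (the one making $\mathrm{Re}(\lambda w/(1-w))$ most negative) reduces this to
$$1-\mathrm{Re}\,w>|w|\,|1-w|.$$
Squaring and minimizing the resulting quadratic in $\cos(\arg w)$ over the disc $|w|<|\alpha|$ is elementary; the minimum is strictly positive precisely when $|\alpha|\le 1/\sqrt2$, which produces the right endpoint $0.707\approx 1/\sqrt2$. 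For such $\alpha$ the function $\Phi_\lambda$ is a positive-real-part multiple of a convex function, hence close-to-convex, and $F_\alpha\in SHCC$.

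The hard part is $\alpha<0$, where $\psi=\varphi_\alpha$ is no longer available: the quantity $1+z\varphi_\alpha''/\varphi_\alpha'=(1-\alpha)+\alpha\,z\varphi'/\varphi$ has real part unbounded below, because $\mathrm{Re}(z\varphi'/\varphi)$ is unbounded for convex $\varphi$ (take $\varphi(z)=z/(1-z)$), so $\varphi_\alpha$ fails to be convex and the clean factorization argument collapses. Here I would instead compare with the identity $\psi(z)=z$ and prove $\mathrm{Re}\,\Phi_\lambda'>0$ directly by controlling the argument
$$\arg\Phi_\lambda'=\alpha\arg\frac{\varphi}{z}+\arg(1+\lambda\alpha\omega)-\arg(1-\alpha\omega).$$
Convexity of $\varphi$ gives $\mathrm{Re}(\varphi/z)>1/2$, hence $|\arg(\varphi/z)|<\pi/2$, while the bound $|\alpha\omega(z)|<|\alpha|$ gives $|\arg(1\pm\alpha\omega)|<\arcsin|\alpha|$. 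Combining these yields an estimate of the form $|\arg\Phi_\lambda'|<c|\alpha|+2\arcsin|\alpha|$, and imposing that the right-hand side stay below $\pi/2$ produces a transcendental inequality in $|\alpha|$ whose root is the left endpoint $-0.303$.

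I expect this negative-$\alpha$ regime to be the main obstacle. The loss of convexity of $\varphi_\alpha$ forces the switch from the factorization argument to an argument (rotation) estimate, and extracting the sharp constant is delicate: one must bound the oscillation of $\arg(\varphi/z)^{\alpha}$ using the convexity of $\varphi$ and balance it against the two dilatation contributions, rather than simply summing worst cases. The asymmetry of the interval $(-0.303,0.707)$ is precisely a reflection of the fact that these two endpoints come from two different sufficient conditions: the convex-comparison bound $1/\sqrt2$ on the right, and the argument bound on the left.
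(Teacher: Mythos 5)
Your proposal is correct and covers the stated interval, but it is a hybrid: the negative-$\alpha$ half is essentially the paper's own argument, while the positive-$\alpha$ half takes a genuinely different route. For $\alpha\ge 0$ the paper does not use a comparison function at all: it applies Kaplan's arc-integral criterion, bounding $\int_{\theta_1}^{\theta_2}\mathrm{Re}\{1+z\Phi_\lambda''(z)/\Phi_\lambda'(z)\}\,d\theta$ from below by $(1-\alpha/2)(\theta_2-\theta_1)$ minus the argument variation of the two M\"obius factors, which is at least $-4\arcsin(\alpha)>-\pi$ precisely when $\alpha\le\sqrt{2}/2$. Your alternative --- take $\psi=\varphi_\alpha$, convex by Merkes--Wright (already quoted in the paper's Remark), and verify the pointwise inequality $\mathrm{Re}\bigl[(1+\lambda\alpha\omega)/(1-\alpha\omega)\bigr]>0$ --- is sound: writing $w=\alpha\omega$ and taking the worst $\lambda$ reduces it to $1-\mathrm{Re}\,w>|w|\,|1-w|$, and the resulting quadratic in $c=\cos(\arg w)$ has minimum $(1-r)^3(1+r)$ when $r\le(\sqrt{5}-1)/2$ and $r^2(1-2r^2)$ otherwise, so positivity over $|w|<|\alpha|$ holds exactly for $|\alpha|\le 1/\sqrt{2}$. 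This is arguably cleaner (pointwise, no arc integrals) and it isolates the $1/\sqrt{2}$ endpoint as coming from the dilatation factors alone; the cost is invoking Merkes--Wright, which the Kaplan route avoids.

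For $\alpha<0$ you do exactly what the paper does (Noshiro--Warschawski applied to $\Phi_\lambda'=(\varphi/z)^\alpha(1+\lambda\alpha\omega)/(1-\alpha\omega)$ via an argument estimate), but note a numerical inconsistency, in your favor: your bound $|\arg(\varphi/z)|<\pi/2$ gives $|\arg\Phi_\lambda'|<|\alpha|\pi/2+2\arcsin|\alpha|$, and the root of $x\pi/2+2\arcsin x=\pi/2$ is about $0.43$, not $0.303$. The paper's constant $0.303$ is the root of $x\pi+2\arcsin x=\pi/2$, which comes from its coarser estimate $|\arg(\varphi/z)|\le 2\arcsin r<\pi$. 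So your version, carried out as written, actually proves close-to-convexity of every $\Phi_\lambda$ on roughly $(-0.43,0]$, more than the theorem claims; either way the stated interval is covered, so this is a misidentified constant rather than a gap.
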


\begin{proof} Since $\varphi$ is a convex mapping, we have that $Re\{z\varphi'(z)/\varphi(z)\}\geq1/2$ and $Re\{\varphi(z)/z\}\geq 0$. So for any $\lambda\in\D$, $\alpha\geq0$, and $\Phi_\lambda$ defined as in Theorem\,\ref{B}, satisfies that
\small{\begin{align*}
\int_{\theta_1}^{\theta_2}Re\left\lbrace 1+ z\frac{\varPhi_\lambda''(z)}{\varPhi_\lambda'(z)} \right\rbrace d\theta &=\int_{\theta_1}^{\theta_2}Re\left\lbrace 1+\alpha\left[z\frac{\varphi'(z)}{\varphi(z)}-1 \right]+ \frac{\lambda z\omega_\alpha'(z)}{1+\lambda \omega_\alpha(z)} +\frac{z\omega_\alpha'(z)}{1-\omega_\alpha(z)} \right\rbrace d\theta\\
& = \int_{\theta_1}^{\theta_2}\left[1-\alpha+\alpha Re\left\lbrace z\frac{\varphi'(z)}{\varphi(z)} + \frac{\lambda z\omega'(z)}{1+\lambda \omega_\alpha(z)} +\frac{z\omega'}{1-\omega_\alpha(z)} \right\rbrace \right]d\theta\\ 
&\geq \left( 1-\dfrac{\alpha}{2}\right) (\theta_2-\theta_1)- \textit{Arg} \left\lbrace \frac{1+\lambda\alpha\omega(re^{i\theta_2})}{1+\lambda\alpha\omega(re^{i\theta_1})}\cdot \frac{1-\alpha\omega(re^{i\theta_2})}{1-\alpha\omega(re^{i\theta_1})}\right\rbrace\\
&\geq -4\arcsin(r\alpha)>-4\arcsin(\alpha), 
\end{align*}}
for all $0\leq \theta_2-\theta_1\leq 2\pi.$ Thus, if $\arcsin(\alpha)\leq\pi/4,$ or equivalently $0\leq\alpha\leq \sqrt 2/2\sim 0.707,$ we conclude that $$\int_{\theta_1}^{\theta_2}Re\left\lbrace 1+ z\frac{\varPhi_\lambda''(z)}{\varPhi_\lambda'(z)} \right\rbrace d\theta>-\pi.$$
In the case when $\alpha<0$, we observe that $$\Phi_\lambda'(z)=\varphi'_\alpha(z)\dfrac{1+\lambda\alpha\omega(z)}{1-\alpha\omega(z)}=\left(\dfrac{\varphi(z)}{z}\right)^\alpha\dfrac{1+\lambda\alpha\omega(z)}{1-\alpha\omega(z)}.$$
From the inequality $Re\{\varphi(z)/z\}>0,$ we get $$|\textit{Arg}\,\{\Phi_\lambda'(z)\}|\leq \left|\textit{Arg}\left\lbrace \left(\frac{\varphi(z)}{z}\right)^\alpha\right\rbrace\right|+2\arcsin(r|\alpha|)\leq 2|\alpha|\arcsin(r)+2\arcsin(r|\alpha|),$$
which implies, since $r<1,$ that $|\textit{Arg}\,\{\Phi_\lambda'(z)\}|<|\alpha|\pi+2\arcsin(|\alpha|).$ Considering now the function $y(x)=x\pi+2\arcsin(x),$ $x\geq 0,$ we see that $y(x)\leq \pi/2$ if and only if $x\leq x_0\sim 0.303$, which is the first positive zero of the equation $2y(x)-\pi=0$. We conclude that $\textit{Re}\,\{\Phi_\lambda'(z)\}>0$ for all $\lambda\in\overline\D$ and hence $\Phi_\lambda$ is a close-to-convex mapping in the unit disc. This complete the proof.
\end{proof}

\section{On the univalence of $f_\alpha$}\label{section f_alpha}

In this section we consider the problem of find the values of $\alpha$ for which implies that $\varphi_\alpha$ defined by
 \begin{equation}\label{f alpha}
 \varphi_\alpha(z)=\int_0^z(\varphi'(\zeta))^\alpha d\zeta,
 \end{equation}
 is univalent, but in the context of complex harmonic mappings. In a similar way to what was done in the previous section, the starting point will be the shear construction.

\begin{defn} Let $f=h+\overline g$ be a sense-preserving harmonic mapping, with dilatation $\omega$ and suppose that $f$ is the horizontal shear of a locally univalent holomorphic function $\varphi$. We define the harmonic mapping $f_\alpha$ with dilatation $\omega_\alpha$ by $f_\alpha=H+\overline{G},$ where $\omega_\alpha=\alpha\omega$ and $H,G$ satisfy $H-G=\varphi_\alpha,$ $\varphi_\alpha$ defined by (\ref{f alpha}).  
\end{defn}

Since $f$ is a sense-preserving harmonic mapping, $\varphi$ is a locally univalent holomorphic mapping. Moreover, $|\omega_\alpha|=|\alpha\omega|<|\alpha|\leq1$ for all $\alpha\in[-1,1]$, therefore $f_\alpha$ is a sense-preserving harmonic mapping. A direct calculation shows that the pre-schwarzian derivative of $f_\alpha$, given by (\ref{Pf}), is
\begin{equation}\label{eq10}
P_{f_\alpha}= \alpha\dfrac{\varphi''}{\varphi'}+\dfrac{\alpha\omega'}{1-\alpha\omega}-\dfrac{\omega_\alpha\overline{\omega_\alpha}}{1-|\omega_\alpha|^2} = \alpha\left[\dfrac{\varphi''}{\varphi'}+\omega'\left(\dfrac{1-\overline{\alpha\omega}}{(1-\alpha\omega)(1-|\alpha|^2|\omega|^2)}\right)\right].
\end{equation} This equation will be used through the section.

\subsection{Results} In this section, we consider the harmonic mapping $f=h+\overline g$ as the horizontal shear of a locally univalent holomorphic mapping $\varphi$ defined in the unit disc, and the complex dilatation $\omega$ as a self-map of $\D.$ Similar to the previous section, we assume the normalization $h(0)=g(0)=0$, $h'(0)=1$, and $g'(0)=0$.

\begin{thm} Let $\mathcal F$ be a linear invariant family of harmonic mappings in $\D$ with order $\beta$, and $f=h+\overline g\in \mathcal{F}$.  Then $f_\alpha$ is univalent in $\D$ when $\alpha$ satisfies $$|\alpha|\leq \dfrac{1}{2\beta+(3+\|\omega\|)\|\omega^*\|}.$$
\end{thm}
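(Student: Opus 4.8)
The plan is to prove univalence by verifying the hypothesis of the harmonic Becker-type criterion, namely Theorem B. That theorem requires controlling two quantities: the pre-Schwarzian term $(1-|z|^2)|zP_{f_\alpha}(z)|$ and the dilatation term $\frac{|z\omega_\alpha'(z)|(1-|z|^2)}{1-|\omega_\alpha(z)|^2}$, and showing their sum is at most $1$ for all $z\in\D$. The explicit formula \eqref{eq10} for $P_{f_\alpha}$ is exactly what makes this tractable, so the first step is to substitute it in and bound each piece.

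First I would estimate the pre-Schwarzian contribution. From \eqref{eq10} we have $P_{f_\alpha}=\alpha\left[\frac{\varphi''}{\varphi'}+\omega'\frac{1-\overline{\alpha\omega}}{(1-\alpha\omega)(1-|\alpha|^2|\omega|^2)}\right]$. The term $(1-|z|^2)|z\varphi''/\varphi'|$ is controlled by the order $\beta$ of the linear invariant family: since $\mathcal F$ has order $\beta$, the standard LIF estimate gives $(1-|z|^2)|z\varphi''(z)/\varphi'(z)|\leq 2\beta$ (the coefficient bound $|a_2|\leq\beta$ transported by the automorphisms of $\D$, exactly the harmonic analogue of the analytic fact). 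For the second term inside the bracket I would use $|1-\overline{\alpha\omega}|\leq 1+|\alpha|\|\omega\|$, $|1-\alpha\omega|\geq 1-|\alpha|\|\omega\|$, and $1-|\alpha|^2|\omega|^2\geq 1-|\alpha|^2\|\omega\|^2$, combined with the Schwarz--Pick estimate \eqref{omega*} applied to $(1-|z|^2)|z\omega'(z)|\leq (1-|\omega|^2)\|\omega^*\|$. The dilatation term is handled directly: $\omega_\alpha=\alpha\omega$ gives $\frac{|z\omega_\alpha'|(1-|z|^2)}{1-|\omega_\alpha|^2}=\frac{|\alpha||z\omega'|(1-|z|^2)}{1-|\alpha|^2|\omega|^2}$, again bounded via \eqref{omega*}.

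After assembling these bounds, the total will take the form $|\alpha|$ times a sum of the shape $2\beta+(\text{something})\|\omega^*\|$, and I expect the contributing factors to collapse to $(3+\|\omega\|)\|\omega^*\|$ after the various $(1-|\omega|^2)$ and $(1-|\alpha|^2|\omega|^2)$ factors cancel or are bounded crudely by using $|\alpha|\leq 1$ and $|\omega|\leq\|\omega\|$. Setting the resulting bound $\leq 1$ then yields precisely $|\alpha|\leq\frac{1}{2\beta+(3+\|\omega\|)\|\omega^*\|}$, and Theorem B delivers univalence.

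The main obstacle, and the step requiring care, is bookkeeping the $\alpha$-dependent denominators so that the final constant is exactly $(3+\|\omega\|)\|\omega^*\|$ rather than a messier expression. In particular, the factor $\frac{1-\overline{\alpha\omega}}{(1-\alpha\omega)(1-|\alpha|^2|\omega|^2)}$ in the pre-Schwarzian and the factor $\frac{1}{1-|\alpha|^2|\omega|^2}$ in the dilatation term must be bounded in a coordinated way; a naive bound on each would overcount. I expect the clean way is to combine the two $\omega'$-terms before estimating, noting that after pulling out $|\alpha||z\omega'|(1-|z|^2)\leq|\alpha|(1-|\omega|^2)\|\omega^*\|$ the remaining rational factors in $\omega$ telescope against the $(1-|\omega|^2)$ numerator, leaving a clean coefficient bounded by $(3+\|\omega\|)$. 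Verifying that this coefficient is exactly $3+\|\omega\|$ and not merely some comparable quantity is where the real computation lives.
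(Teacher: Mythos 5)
Your plan breaks down at its central estimate, and the breakdown is not repairable within Theorem B. For a linear invariant family of order $\beta$, the coefficient bound $|a_2|\leq\beta$ transported by the automorphisms of $\D$ does \emph{not} give $(1-|z|^2)|z\varphi''(z)/\varphi'(z)|\leq 2\beta$; what it gives is
$$\left|(1-|z|^2)\frac{h''(z)}{h'(z)}-2\bar z\right|\leq 2\beta,$$
with an unavoidable $-2\bar z$ term, and it concerns the analytic part $h$ of the harmonic map $f\in\mathcal F$, not the shear function $\varphi=h-g$ (which belongs to no family of order $\beta$; one must first write $\varphi''/\varphi'=h''/h'-\omega'/(1-\omega)$, and the term $\omega'/(1-\omega)$ you never account for is exactly where the summand $(1+\|\omega\|)\|\omega^*\|$ of the final constant comes from). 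Dropping the $-2\bar z$ costs an additive $2|z|^2$, so if you correct the estimate but insist on Theorem B, the best you can reach is
$$(1-|z|^2)|zP_{f_\alpha}(z)|+\frac{|z\omega_\alpha'(z)|(1-|z|^2)}{1-|\omega_\alpha(z)|^2}\leq |\alpha|\left(2\beta+2+(3+\|\omega\|)\|\omega^*\|\right),$$
which proves univalence only for $|\alpha|\leq 1/\bigl(2\beta+2+(3+\|\omega\|)\|\omega^*\|\bigr)$ --- strictly weaker than the theorem. A further sign that the bookkeeping cannot work: under your decomposition the $\omega'$-contributions total $2\|\omega^*\|$ (one $\|\omega^*\|$ from the bracket in \eqref{eq10}, using $|1-\overline{\alpha\omega}|=|1-\alpha\omega|$ and $1-|\omega|^2\leq1-|\alpha\omega|^2$, and one from the dilatation term), so no amount of telescoping produces the $\|\omega\|$ in $(3+\|\omega\|)\|\omega^*\|$; that term can only arise from $\omega'/(1-\omega)$.

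The missing idea is that the paper does \emph{not} use Theorem B but Theorem C, whose hypothesis involves $\left|(1-|z|^2)zP_{f_\alpha}(z)+c|z|^2\right|$ with a free parameter $c$. Writing
$$z(1-|z|^2)P_{f_\alpha}=\alpha z\left[(1-|z|^2)\frac{h''}{h'}-2\bar z\right]+(2\alpha+c)|z|^2-c|z|^2-\alpha z(1-|z|^2)\frac{\omega'}{1-\omega}+\alpha z(1-|z|^2)\frac{\omega'(1-\overline{\alpha\omega})}{(1-\alpha\omega)(1-|\alpha\omega|^2)}$$
and choosing $c=-2\alpha$ cancels the parasitic $2\alpha|z|^2$ exactly, so the LIF order enters as $2\beta$ with no additive $2$. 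The three $\omega'$-pieces are then bounded by $(1+\|\omega\|)\|\omega^*\|$, $\|\omega^*\|$, and (for the dilatation term of the criterion) $\|\omega^*\|$, which assembles to $|\alpha|\bigl(2\beta+(3+\|\omega\|)\|\omega^*\|\bigr)\leq 1$, i.e.\ precisely the stated range of $\alpha$. Without the $c|z|^2$ device --- which is the very reason Theorem C was proved in \cite{VBRHOV1} --- the stated constant is out of reach.
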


\begin{proof} Note that, by (\ref{eq10}), for all $z\in\D,$ 
\begin{equation*}
\left| z(1-|z|^2)P_{f_\alpha}+c|z|^2\right|=\left|z(1-|z|^2)\alpha\left[\dfrac{\varphi''}{\varphi'}+\dfrac{\omega'(1-\overline{\alpha\omega)}}{(1-\alpha\omega)(1-|\alpha\omega|^2)}\right]+c|z|^2\right|.
 \end{equation*}
 Since $\varphi=h-g$ then $\dfrac{\varphi''}{\varphi'}=\dfrac{h''}{h'}-\dfrac{\omega'}{1-\omega}$, where $\omega=g'/h'.$ it follows that
\begin{eqnarray*}
  	\left| z(1-|z|^2)P_{f_\alpha}+c|z|^2\right| &\leq &|\alpha|\left| z\left(\dfrac{h''}{h'}(1-|z|^2)-2\overline{z}\right)\right|+|(2\alpha+c)z^2|\\
& & +|\alpha|\left|\dfrac{\omega'}{1-\omega}(1-|z|^2)\right|+|\alpha|\left|\dfrac{ \omega'(1-|z|^2)}{1-|\alpha\omega|^2}\right|\\[0.3cm]
&\leq & |\alpha|(2\beta+(2+\|\omega\|)\|\omega^*\|)+|2\alpha+c|.
\end{eqnarray*} 
Thus, 
\begin{equation*}  \left| z(1-|z|^2)P_{f_\alpha}+c|z|^2\right|+ \frac{|z\omega_\alpha'|(1-|z|^2)}{1-|\omega_\alpha|^2} \leq |\alpha|\left(2\beta+(3+\|\omega\|)\|\omega^*\|\right)+|2\alpha+c|.
\end{equation*}
 Using Theorem C, with $c=-2\alpha\neq-1,$ we obtain that $f_\alpha$ is univalent in $\D,$ if $$|\alpha|\leq \dfrac{1}{2\beta+(3+\|\omega\|)\|\omega^*\|},$$
which ends the proof.
\end{proof}

A domain $\Omega\subset\mathbb{C}$ is {\it $m$-linearly connected} if
there exists a constant $m<\infty$ such that any two points
$w_1,w_2\in\Omega$ are joined by a path $\gamma\subset\Omega$ of
length $l(\gamma)\leq m|w_1-w_2|$, or equivalently (see \cite{P92}),
${\rm diam}(\gamma)\leq m|w_1-w_2|$. Such a domain is necessarily
a Jordan domain, and for piecewise smoothly bounded domains,
linear connectivity is equivalent to the boundary's having no
inward-pointing cusps.

\begin{lemma} Let $f=h+\overline g$ be a sense-preserving harmonic mapping in $\D,$ which is the horizontal shear of univalent mapping $\varphi$, such that $\varphi(\D)$ is a $m$-linearly connected domain of $\mathbb{C}$. Then $f=h+\overline g$ is univalent if $\|\omega\|<1/(2m+1)$.
\end{lemma}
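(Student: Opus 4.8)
The plan is to prove injectivity of $f$ directly, by estimating $|f(z_1)-f(z_2)|$ from below for two distinct points $z_1,z_2\in\D$, exploiting the fact that the horizontal shear forces $f$ and $\varphi$ to share the same imaginary part. First I would record the consequences of the shear construction: from $h-g=\varphi$ and $g'=\omega h'$ one gets $h'=\varphi'/(1-\omega)$ and $g'=\omega\varphi'/(1-\omega)$, so that the pointwise bound $|g'|\le \tfrac{\|\omega\|}{1-\|\omega\|}\,|\varphi'|$ holds. Writing $\Delta f=f(z_1)-f(z_2)$ and likewise $\Delta\varphi,\Delta g$, the identity $\Delta h=\Delta\varphi+\Delta g$ together with $\Delta f=\Delta h+\overline{\Delta g}$ yields the key relation
\[
\Delta f=\Delta\varphi+2\,\mathrm{Re}(\Delta g),
\]
which is exactly the analytic reflection of $\mathrm{Im}\,f=\mathrm{Im}\,\varphi$. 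Hence it suffices to show $|\Delta g|<\tfrac12|\Delta\varphi|$ whenever $z_1\ne z_2$, since the reverse triangle inequality then gives $|\Delta f|\ge |\Delta\varphi|-2|\Delta g|>0$, and $\Delta\varphi\ne0$ because $\varphi$ is univalent.

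Second, I would bring in the linear connectivity to manufacture a path along which the bound on $|\Delta g|$ becomes effective. Since $\varphi(\D)$ is $m$-linearly connected, the points $\varphi(z_1)$ and $\varphi(z_2)$ are joined by a curve $\gamma\subset\varphi(\D)$ with $l(\gamma)\le m|\varphi(z_1)-\varphi(z_2)|=m|\Delta\varphi|$. Pulling back through the univalent map $\varphi$ produces a curve $\sigma=\varphi^{-1}(\gamma)\subset\D$ joining $z_1$ and $z_2$, and the change of variables $w=\varphi(\zeta)$ gives $\int_\sigma|\varphi'(\zeta)|\,|d\zeta|=l(\gamma)\le m|\Delta\varphi|$. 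As $g$ is analytic, $\Delta g=\int_\sigma g'(\zeta)\,d\zeta$ is path independent, so I may evaluate it along this particular $\sigma$.

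Third, combining the two ingredients,
\[
|\Delta g|\le\int_\sigma|g'(\zeta)|\,|d\zeta|\le\frac{\|\omega\|}{1-\|\omega\|}\int_\sigma|\varphi'(\zeta)|\,|d\zeta|\le\frac{m\|\omega\|}{1-\|\omega\|}\,|\Delta\varphi|,
\]
whence $|\Delta f|\ge\left(1-\dfrac{2m\|\omega\|}{1-\|\omega\|}\right)|\Delta\varphi|$. A one-line algebraic check shows the coefficient is positive precisely when $\|\omega\|<1/(2m+1)$, which is the hypothesis; this forces $\Delta f\ne0$ and establishes univalence.

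I expect the only genuinely delicate point to be the passage to the path $\sigma$ and the justification of the length identity $\int_\sigma|\varphi'|\,|d\zeta|=l(\gamma)$, namely ensuring the connecting curve lies in $\varphi(\D)$ and pulls back to a rectifiable arc in $\D$; everything else is shear bookkeeping plus a single scalar inequality. The mechanism is the same in spirit as the linearly-connected-domain arguments for $f=h+\overline g$ with $h$ univalent, the one change being that here the univalent, linearly connected object is $\varphi=h-g$ rather than $h$, which is precisely what makes the clean identity $\Delta f=\Delta\varphi+2\,\mathrm{Re}(\Delta g)$ available.
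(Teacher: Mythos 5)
Your proof is correct and is essentially the paper's argument: both rest on the shear identity $f=\varphi+2\,\mathrm{Re}(g)$, the $m$-linearly connected path $\gamma$ in $\varphi(\D)$, and the bound $|g'|\le \frac{\|\omega\|}{1-\|\omega\|}|\varphi'|$ (equivalently $|(g\circ\varphi^{-1})'|\le \frac{\|\omega\|}{1-\|\omega\|}$), ending with the same scalar inequality $2m\|\omega\|/(1-\|\omega\|)<1$. The only cosmetic differences are that you derive a direct lower bound $|\Delta f|\ge\bigl(1-\tfrac{2m\|\omega\|}{1-\|\omega\|}\bigr)|\Delta\varphi|$ rather than arguing by contradiction from $f(z_1)=f(z_2)$, and you integrate $g'$ along the pulled-back path $\sigma\subset\D$ whereas the paper integrates $(g\circ\varphi^{-1})'=\omega/(1-\omega)$ along $\gamma$ itself, which sidesteps your rectifiability worry; these are the same integral after a change of variables.
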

\begin{proof} Let $z_1$ and $z_2$ in the unit disc, such that $f(z_1)=f(z_2)$, which is equivalent to $\varphi(z_1)+\mbox{Re}\{g(z_1)\}=\varphi(z_2)+\mbox{Re}\{g(z_2)\}$. Since $\varphi(\D)$ is $m$-linearly connected, there exists a path $\gamma$ joining $\zeta_1=\varphi(z_2)$ and $\zeta_2=\varphi(z_2)$ such that $\ell_\gamma\leq m|\zeta_1-\zeta_2|$. But $\varphi(z_1)-\varphi(z_2)=-2\mbox{Re}\{g(z_1)-g(z_2)\}$ and $\varphi$ is univalent mapping, then $$|\zeta_1-\zeta_2|=2\left|\mbox{Re}\{g\circ \varphi^{-1}(\zeta_1)-g\circ \varphi^{-1}(\zeta_2)\}\right|\leq2\left|\int_{\zeta_1}^{\zeta_2}(g\circ \varphi^{-1})'(t)dt\right|.$$ However, $$(g\circ \varphi^{-1})'(t)=g'(z)/(h'(z)-g'(z))=\omega(z)/(1-\omega(z)),$$ where $\varphi(z)=t.$ Since in the integral can be used any path joining $\zeta_1$ and $\zeta_2$, we get $$|\zeta_1-\zeta_2|\leq \dfrac{2\|\omega\|}{1-\|\omega\|}\int_{\gamma}|dt|=\dfrac{2\|\omega\|}{1-\|\omega\|}\ell_\gamma<\dfrac{2m\|\omega\|}{1-\|\omega\|}|\zeta_1-\zeta_2|.$$
But for the considered values of $\|\omega\|$ it follows that $2m\|\omega\|/(1-\|\omega\|)<1$, then $|\zeta_1-\zeta_2|=0$. This complete the proof.
\end{proof}
\begin{thm} Let $f=h+\overline g$ be a sense-preserving harmonic mapping, such that $\varphi=h-g$ is a convex mapping. For all $\alpha\in[0,1]$ such that $|\alpha|\|\omega\|<1/3$, the corresponding $f_\alpha$ is univalent in $\D$.
\end{thm}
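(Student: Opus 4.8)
The plan is to recognize that $f_\alpha$ is, by construction, the horizontal shear of $\varphi_\alpha(z)=\int_0^z(\varphi'(\zeta))^\alpha\,d\zeta$ with dilatation $\omega_\alpha=\alpha\omega$, and then to invoke the Lemma immediately preceding the statement. For that Lemma to apply I need two things: that $\varphi_\alpha$ is univalent, and that $\varphi_\alpha(\D)$ is $m$-linearly connected for an explicit $m$. The key observation is that when $\varphi$ is convex and $\alpha\in[0,1]$, the map $\varphi_\alpha$ is again convex, so that $\varphi_\alpha(\D)$ is convex and hence $1$-linearly connected. With $m=1$ the Lemma then yields univalence of $f_\alpha$ precisely when the dilatation $\omega_\alpha=\alpha\omega$ satisfies $\|\omega_\alpha\|<1/(2m+1)=1/3$, i.e.\ $|\alpha|\,\|\omega\|<1/3$.

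First I would establish the convexity of $\varphi_\alpha$. Since $\varphi_\alpha'=(\varphi')^\alpha$ (the branch with $\varphi_\alpha'(0)=1$ is well defined because $\varphi$ is locally univalent), taking logarithmic derivatives gives $\varphi_\alpha''/\varphi_\alpha'=\alpha\,\varphi''/\varphi'$, whence
\[
1+z\frac{\varphi_\alpha''(z)}{\varphi_\alpha'(z)}=1+\alpha\,z\frac{\varphi''(z)}{\varphi'(z)}=\alpha\Bigl(1+z\frac{\varphi''(z)}{\varphi'(z)}\Bigr)+(1-\alpha).
\]
Because $\varphi$ is convex, the function $p(z):=1+z\varphi''(z)/\varphi'(z)$ has positive real part with $p(0)=1$, and since $\alpha\in[0,1]$ one has $\alpha\geq0$ and $1-\alpha\geq0$, so $\mathrm{Re}\{\alpha p(z)+(1-\alpha)\}=\alpha\,\mathrm{Re}\,p(z)+(1-\alpha)>0$. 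Thus $\mathrm{Re}\{1+z\varphi_\alpha''(z)/\varphi_\alpha'(z)\}>0$, which is exactly the analytic characterization of convexity; in particular $\varphi_\alpha$ is univalent.

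Next I would note that a convex domain $\Omega$ is $1$-linearly connected: any two points $w_1,w_2\in\Omega$ are joined by the straight segment $[w_1,w_2]\subset\Omega$, whose length equals $|w_1-w_2|$, so the defining inequality $\ell(\gamma)\leq m|w_1-w_2|$ holds with $m=1$. Applying the Lemma to $f_\alpha$, which is the horizontal shear of the univalent (convex) map $\varphi_\alpha$ with $\varphi_\alpha(\D)$ convex and dilatation $\omega_\alpha=\alpha\omega$, gives univalence of $f_\alpha$ whenever $\|\omega_\alpha\|<1/(2\cdot1+1)=1/3$. Since $\|\omega_\alpha\|=|\alpha|\,\|\omega\|$, this is exactly the stated hypothesis $|\alpha|\,\|\omega\|<1/3$; moreover $|\alpha|\,\|\omega\|<1/3<1$ forces $|\omega_\alpha|<1$, so $f_\alpha$ is genuinely sense-preserving and the Lemma is legitimately invoked.

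The only genuine content is the convexity of $\varphi_\alpha$, and the computation above is short, so I do not anticipate a serious obstacle; the main point is simply to recognize that the operator $\varphi\mapsto\int_0^z(\varphi')^\alpha$ preserves convexity for $\alpha\in[0,1]$ and that convexity forces the sharp connectivity constant $m=1$. One should only pause to confirm the degenerate case $\alpha=0$ (where $\varphi_\alpha(z)=z$ and $f_\alpha$ reduces to the identity in its analytic part) and the fact that $(\varphi')^\alpha$ is single-valued, both of which are immediate from $\varphi'$ being zero-free. The heavy lifting is then entirely delegated to the preceding Lemma.
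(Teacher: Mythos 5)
Your proposal is correct and follows essentially the same route as the paper: establish that $\varphi_\alpha$ is convex via $\varphi_\alpha''/\varphi_\alpha'=\alpha\,\varphi''/\varphi'$, then apply the preceding $m$-linearly connected lemma with $m=1$ to get univalence from $\|\omega_\alpha\|=|\alpha|\,\|\omega\|<1/3$. The only cosmetic difference is that you invoke the lemma's statement directly (after noting that convex domains are $1$-linearly connected), whereas the paper says ``arguing as in the proof of the previous lemma''; the content is identical, and your version is if anything slightly more careful about strict positivity of $\mathrm{Re}\{1+z\varphi_\alpha''/\varphi_\alpha'\}$ and the sense-preserving hypothesis.
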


\begin{proof} Because of $\varphi'_\alpha=(\varphi')^\alpha$ we have
	$$1+\mbox{Re}\left\{z\dfrac{\varphi_\alpha''}{\varphi_\alpha'}(z)\right\}=1+\alpha\left(\mbox{Re}\left\{z\dfrac{\varphi''}{\varphi'}(z)\right\}+1\right)-\alpha\geq 1-\alpha\geq0,$$
	therefore $\varphi_\alpha$ is a convex mapping. Arguing as in the proof of the previous lemma, we conclude that the univalence of $f_\alpha$ depends if $2\|\omega_\alpha\|/(1-\|\omega_\alpha\|)<1$ (in this case $m=1$), which occurs when $\alpha\|\omega\|<1/3$.
\end{proof}


\begin{thebibliography}{99}

\bibitem{B72} Becker~J. L\"{o}wnersche
    Differentialgleichung und quasi-konform forttsetzbare schlichte Funktionen. J. Reine. Angew.
    Math. 1972; 225:23--43.\\
    
\bibitem{VBRHOV1}Bravo~V., Hern\'andez~R., Venegas~O. On the univalence of certain integral transform for harmonic mappings. J. Math. Anal. Appl. 2017; 455(1):381--388.\\

\bibitem{CSS}Clunie~J., Sheil-Small~T. Harmonic univalent functions. Ann. Acad. Sci. Fenn. Ser. A.I. 1984; 9:3--25.\\

\bibitem{deBranges}de Branges~L. A proof of the Bieberbach conjecture. Acta Math. 1985; 154:137--152.\\


\bibitem{Dur} Duren~P.L. Harmonic mapping in the plane. Cambrige University Press; 2004.\\

\bibitem{Dur83} Duren~P.L. Univalent Functions. Springer-Verlag, New York; 1983.\\

\bibitem{G} Goodman~A.W. Univalent Functions. Mariner Publishing Company, Florida; 1983.\\

\bibitem{gk} Graham~I., Kohr~G. Geometric Function Theory In One And Higher Dimensions. Marcel Dekker Inc., New York, Basel; 2003.\label{GK}\\

\bibitem{RH-MJ} Hern\'andez~R., Mart\'in~M.J. Pre-Schwarzian and Schwarzian Derivatives of Harmonic Mappings. J. Geom. Anal. 2015; 25(1); 64--91.\\

\bibitem{rhmje} Hern\'andez~R., Mart\'in~M.J. Stable geometric properties of analytic and harmonic functions. Math. Proc. Cambridge Philos. Soc. 2013; 155(2):343--359.\\

\bibitem{HM-QC} Hern\'andez~R., Mart\'in~M.J. Quasi-conformal extensions of harmonic mappings in the plane. Ann. Acad. Sci. Fenn. Ser. A. I Math. 2013; 38: 617--630.\\

\bibitem{HM-arch} Hern\'andez~R., Mart\'in~M.J. Criteria for univalence and quasiconformal extension of harmonic mappings in terms of the Schwarzian derivative. Arch. Math. 2015; 104(1):53--59.\\


\bibitem{km}Kim~W.J., Merkes~E.P. On an integral of powers of a spirallike function. Kyungpook Math. J. 1972; 12(2):249--253.\\


\bibitem{Lewy} Lewy~H. On the non-vanishing of the Jacobian in certain one-to-one mappings. Bull. Am. Math. Soc. 1936; 42:689--692.\\
    
\bibitem{MW71} Merkes~E.P., Wright~D.J. On the univalence of certain integral. Proc. Amer. Math. Soc. 1971; 27(1):97--100.\\



\bibitem{Pf75} Pfaltzgraff~J.A. Univalence of the integral of $f'(z)^{\lambda}$. Bull. London Math. Soc. 1975; 7:254--256.\\



\bibitem{POM1}Pommerenke~Ch. Linear-invariante Familien analytischer Funktionen {I}. Math. Ann. 1964; 155:108--154.\\

\bibitem{P92}Pommerenke~Ch. Boundary Behaviour of Conformal Maps. Grundlehrender Math. Wiss. 299, Spinger-Verlag; 1992.\\

\bibitem{Ro65} Royster~W.C.  On the univalence of a certain integral. Michigan Math. J. 1965; 12:385--387.\\



\end{thebibliography}

Facultad de Ciencias, Universidad Nacional de Colombia,  sede Medell\'{\i}n, Colombia. Supported by Universidad Nacional de Colombia, Hermes Code 41381. \email{hjarbela@unal.edu.co}. \\ 

Facultad de Ingenier\'ia y Ciencias, Universidad Adolfo Ib\'a\~nez, Av. Padre Hurtado 750, Vi\~na del Mar, Chile. \email{victor.bravo.g@uai.cl}.\\

Facultad de Ingenier\'ia y Ciencias, Universidad Adolfo Ib\'a\~nez, Av. Padre Hurtado 750, Vi\~na del Mar, Chile.\email{rodrigo.hernandez@uai.cl}. \\  

Departamento de Matem\'aticas, Universidad del Cauca, Popay\'{a}n, Colombia. The author wishes to thank the Universidad del Cauca for providing time for this work through research project VRI ID 5069. \email{wsierra@unicauca.edu.co}.\\ 

Departamento de Ciencias Matem\'aticas y F\'{\i}sicas, Facultad de Ingenier\'{\i}a, Universidad Cat\'olica de
Temuco, Chile. \email{ovenegas@uct.cl}.

\end{document}